\newtheorem{theorem}{Theorem}[section]
\newtheorem{lemma}[theorem]{Lemma}
\newtheorem{theo}[theorem]{Theorem}
\newtheorem{definition}[theorem]{Definition}
\newtheorem*{Definition*}{Definition}
\def\qed{\hfill \ifhmode\unskip\nobreak\fi\quad\ifmmode\Box\else$\Box$\fi\\ }
\begin{document}

\title[Torus actions on oriented manifolds of generalized odd type]{Torus actions on oriented manifolds of generalized odd type}
\author{Donghoon Jang}
\address{Department of Mathematics, Pusan National University, Pusan, Korea}
\email{donghoonjang@pusan.ac.kr}
\thanks{Donghoon Jang is supported by Basic Science Research Program through the National Research Foundation of Korea (NRF) funded by the Ministry of Education (2018R1D1A1B07049511).}
\begin{abstract}
In \cite{LS}, Landweber and Stong prove that if a closed spin manifold $M$ admits a smooth $S^1$-action of odd type, then its signature $\mathrm{sign}(M)$ vanishes. In this paper, we extend the result to a torus action on a closed oriented manifold with generalized odd type.
\end{abstract}

\maketitle

\section{Introduction}

$\indent$

The existence of a non-trivial group action on a manifold gives certain restrictions on the manifold and one of them is a characteristic class. Atiyah and Hirzebruch prove that if a closed spin manifold admits a non-trivial circle action, then its $\hat{A}$-genus vanishes \cite{AH}. Hattori generalizes the result to $\textrm{spin}^c$-manifolds \cite{H}. It is shown later that $\hat{A}$-genus vanishes if an oriented manifold with finite second and fourth homotopy groups admits an $S^1$-action \cite{HH}, \cite{HH3}.

In this paper, we discuss the vanishing of $L$-genus of an oriented manifold, that is, the signature of the manifold. The $L$-genus is the characteristic class belonging to the power series $f(x)=\frac{\sqrt{x}}{\tanh \sqrt{x}}$. The signature of an oriented manifold $M$ is the index of the signature operator on $M$. The Atiyah-Singer index theorem states that the $L$-genus of an oriented manifold $M$ is equal to the signature of $M$ \cite{AS}. Kawakubo and Uchida prove that if a closed oriented manifold $M$ admits a semi-free $S^1$-action with $\dim(M^{S^1})<\frac{1}{2}\dim M$, then the signature of $M$ vanishes \cite{KU}. Li and Liu generalize the result to a so-called prime action \cite{LL}. For a vanishing result on the signature of a manifold with a finite group action, see \cite{E} for instance.

Let $M$ be an orientable manifold. Introduce a Riemannian metric on $M$. A spin structure on $M$ is an equivariant lift $P$ (called a principal $Spin(n)$-bundle) of the oriented orthonormal frame bundle $Q$ (called the principal $SO(n)$-bundle) over $M$ with respect to the double covering $\pi: Spin(n) \to SO(n)$.

Let the circle act on a spin manifold $M$. Then the action lifts to an action on the principal $SO(n)$-bundle $Q$. The action is called \textbf{of even type}, if it further lifts to an action on the principal $Spin(n)$-bundle $P$. The action is called \textbf{of odd type}, if it fails to lift to an action on $P$.

Given an action of a Lie group $G$ on a manifold $M$, denote $M^G$ by the set of points fixed by the $G$-action on $M$, i.e., $M^G=\{p \in M|g \cdot p=p, \forall g \in G\}$. If $H$ is a subgroup of $G$, then define the set $M^H$ of points fixed by the $H$-action in the same way.

Given a circle action on a spin manifold $M$, as a subgroup of $S^1$, $\mathbb{Z}_2$ also acts on $M$. The $S^1$-action on $M$ is of even type if and only if each connected component of the set $M^{\mathbb{Z}_2}$ has codimension congruent to 0 modulo 4. Similarly, the $S^1$-action on $M$ is of odd type if and only if each connected component of $M^{\mathbb{Z}_2}$ has codimension congruent to 2 modulo 4. For this, see \cite{AH}.

Now consider a circle action on a manifold $M$. Since $M$ need not allow a spin structure, we use the latter equivalent definition to define an action of even type and an action of odd type. The $S^1$-action is called \textbf{of even type}, if each connected component of $M^{\mathbb{Z}_2}$ has codimension congruent to 0 modulo 4 and \textbf{of odd type}, if each connected component of $M^{\mathbb{Z}_2}$ has codimension congruent to 2 modulo 4. As before, $\mathbb{Z}_2$ acts on $M$ as a subgroup of $S^1$. In \cite{HH2}, H. Herrera and R. Herrera adapt these alternative definitions for circle actions on oriented manifolds.

Landweber and Stong prove that a closed spin manifold admitting a circle action of odd type must have vanishing $L$-genus \cite{LS}.

\begin{theo} \label{t11} \cite{LS} If a closed spin manifold $M$ admits a smooth $S^1$-action of odd type, then its signature $\mathrm{sign}(M)$ vanishes. \end{theo}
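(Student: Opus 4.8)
The plan is to localise the signature on the fixed set of the involution $\iota=-1\in S^1$, to extract the sign that the odd-type hypothesis forces there, and then to use the spin structure to produce the opposite sign.

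I would begin from the rigidity of the signature operator: for a smooth $S^1$-action on a closed oriented manifold its $S^1$-equivariant index is the constant $\mathrm{sign}(M)$, so in particular $\mathrm{sign}(M)=\mathrm{sign}(\iota,M)$. Applying the Atiyah--Singer $G$-signature theorem to $\iota$, the fixed set $M^{\mathbb{Z}_2}$ decomposes into closed oriented components $F$; on each one $\iota$ acts as $-\mathrm{id}$ on the normal bundle $\nu_F$, which therefore has even rank $2m_F$ and inherits from the residual $S^1$-rotation a complex structure of rank $m_F$. The $G$-signature theorem then reads
$$\mathrm{sign}(M)=\mathrm{sign}(\iota,M)=\sum_{F\subset M^{\mathbb{Z}_2}}(-1)^{m_F}\,\Lambda(F),$$
where $\Lambda(F)$ is the sign-stripped local contribution, a universal characteristic number built from $\mathcal{L}(F)$ and the characteristic classes of $\nu_F$. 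By hypothesis the action is of odd type, i.e.\ every component of $M^{\mathbb{Z}_2}$ has codimension $\equiv 2\pmod 4$; thus $m_F$ is odd for every $F$, every sign is $-1$, and
$$\mathrm{sign}(M)=-\sum_{F\subset M^{\mathbb{Z}_2}}\Lambda(F).\qquad(\ast)$$

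To produce the opposite sign I would invoke the spin structure. For $M$ spin the signature equals the index of the Dirac operator $D$ twisted by the full complex spinor bundle of the tangent bundle, $\mathrm{sign}(M)=\mathrm{ind}\bigl(D\otimes\mathcal{S}(TM)\bigr)$, because $\langle\hat{A}(M)\,\mathrm{ch}\,\mathcal{S}(TM),[M]\rangle=\langle\mathcal{L}(M),[M]\rangle$. Since the obstruction to lifting the $S^1$-action to the principal $\mathrm{Spin}$-bundle is $2$-torsion it dies in the connected double cover $\rho\colon\widetilde{S^1}\to S^1$, so the action lifts to $\widetilde{S^1}$; choose $\zeta\in\widetilde{S^1}$ over $\iota$ with $\zeta^2$ the nontrivial deck transformation (so $\langle\zeta\rangle\cong\mathbb{Z}_4$). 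The $\widetilde{S^1}$-equivariant index of $D\otimes\mathcal{S}(TM)$ is again the constant $\mathrm{sign}(M)$, while the Lefschetz fixed-point formula for it at $\zeta$, localised on $M^{\mathbb{Z}_2}$, involves the spinor bundles $\mathcal{S}(\nu_F)$, whose $\widetilde{S^1}$-weights are half-integers precisely because each $m_F$ is odd. Carrying these half-integer weights through the computation replaces each normal factor carrying the sign $(-1)^{m_F}$ in $(\ast)$ by the corresponding factor without that sign, so the same sum reappears with a $+$, giving $\mathrm{sign}(M)=+\sum_F\Lambda(F)$. Together with $(\ast)$ this forces $\mathrm{sign}(M)=-\mathrm{sign}(M)$, hence $\mathrm{sign}(M)=0$.

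The delicate step is the last one: one must verify that the $\widetilde{S^1}$-equivariant index of $D\otimes\mathcal{S}(TM)$ really coincides with that of the signature operator, keep careful track of the scalar by which $\zeta$ acts on the (formal) spinor factor $\mathcal{S}(TF)$ and of the half-integer weights of $\mathcal{S}(\nu_F)$, and confirm that the resulting local terms match those of $(\ast)$ up to exactly the sign $(-1)^{m_F}$. An alternative is to replace this comparison by a limiting argument in the equivariant parameter, in the spirit of Atiyah--Hirzebruch's proof that $\hat{A}(M)=0$ for a spin manifold with nontrivial $S^1$-action; there the odd-type hypothesis enters through the fact that the sum of the normal weights at each $S^1$-fixed component is odd, and the bookkeeping is of comparable difficulty.
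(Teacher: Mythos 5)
Your plan hinges on the final comparison, and that is precisely where it breaks. The operator $D\otimes\mathcal{S}(TM)$ \emph{is} the signature operator: its bundle is $\mathcal{S}(TM)\otimes\mathcal{S}(TM)\cong\Lambda^{*}T^{*}M\otimes\mathbb{C}$, and the deck transformation of $\widetilde{S^1}$ acts by $-1$ on each spinor factor, hence trivially on the tensor product. Consequently the element $\zeta$ acts on this operator exactly as $\iota$ does, its Lefschetz number at $\zeta$ is literally $\mathrm{sign}(\iota,M)$, and the localized fixed-point formula reproduces the same local contributions as in your $(\ast)$ --- the half-integer weights of $\mathcal{S}(\nu_F)$ recombine and cannot ``strip'' the sign $(-1)^{m_F}$. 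If the term-by-term identification you assert were valid, it would force each $\Lambda(F)=0$, which nothing in the argument shows and which is false in general. So the spin structure and the $\mathbb{Z}_4$-lift do not enter the signature computation in the way you claim; to make a sign/parity argument of this type work one needs a twist on which the deck element acts by $-1$ (so that rigidity together with oddness of the resulting character forces vanishing), and you have not produced one --- indeed you flag this step yourself as unverified. A secondary gap: the complex structure on $\nu_F$ ``inherited from the residual $S^1$-rotation'' is not available, since $S^1$ does not fix $F$ pointwise (only $\mathbb{Z}_2$ does); the evenness of the codimension here comes from the odd-type hypothesis, not from any complex structure.

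For contrast, the paper's argument (its Theorem \ref{t13}, of which the statement is a special case) needs none of this machinery and no spin structure at all: by rigidity, $\mathrm{sign}(M)$ is the sum of $\mathrm{sign}(N)$ over the $S^1$-fixed components $N$; grouping the $N$ by the components $Z$ of $M^{\mathbb{Z}_2}$ that contain them (such $Z$ are orientable, by the Herrera--Herrera lemma, since they meet $M^{S^1}$) and applying the same rigidity to the residual circle action on each $Z$ gives $\sum_{N\subset Z}\mathrm{sign}(N)=\mathrm{sign}(Z)$, which vanishes because $\dim Z\equiv 2 \bmod 4$. Summing over $Z$ yields $\mathrm{sign}(M)=0$ directly.
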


In this paper, we generalize Theorem \ref{t11} in three directions:
\begin{enumerate}
\item from spin manifolds to oriented manifolds.
\item from circle actions to torus actions.
\item from odd type to generalized odd type.
\end{enumerate}

In this paper, for a torus action on a manifold, we introduce the notion of an action of generalized odd type.

\begin{definition} Let a $k$-torus $\mathbb{T}^k$ act on a manifold $M$. Let $S$ be a closed subgroup of $\mathbb{T}^k$.
\begin{enumerate}[(1)]
\item The $\mathbb{T}^k$-action is called \textbf{of even type with respect to $S$}, if for any connected component $Z$ of $M^S$, we have $\dim Z \equiv \dim M \mod 4$. An action of a torus $\mathbb{T}^k$ on a manifold is called \textbf{of generalized even type}, if it is of even type with respect to some closed subgroup of $\mathbb{T}^k$.
\item The $\mathbb{T}^k$-action is called \textbf{of odd type with respect to $S$}, if for any connected component $Z$ of $M^S$, we have $\dim Z \equiv \dim M-2 \mod 4$. An action of a torus $\mathbb{T}^k$ on a manifold is called \textbf{of generalized odd type}, if it is of odd type with respect to some closed subgroup of $\mathbb{T}^k$.
\end{enumerate}
\end{definition}

Therefore, a circle action on a manifold being of odd type is a special of a torus action of generalized odd type where the torus is the circle, and the closed subgroup is $\mathbb{Z}_2$. The main result of this paper is the following:

\begin{theorem} \label{t13} Let a torus act on a closed oriented manifold $M$. If the action is of generalized odd type, then the signature of $M$ vanishes. \end{theorem}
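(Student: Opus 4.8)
The plan is to carry the Landweber--Stong strategy into this more general setting, using two inputs: the rigidity of the signature operator, and the Atiyah--Singer $G$-signature theorem. First note that we may assume $\dim M \equiv 0 \bmod 4$, since otherwise $\mathrm{sign}(M)=0$ automatically; write $\dim M = 4n$. It is classical that the signature operator is rigid for smooth actions of a torus on a closed \emph{oriented} manifold (it is here, not in the $G$-signature computation, that we gain direction~(1): no spin structure is needed). Hence $\mathrm{Sign}(\mathbb{T}^k,M)\in R(\mathbb{T}^k)$ equals $\mathrm{sign}(M)\cdot[\mathbb{C}]$, and restricting along $R(\mathbb{T}^k)\to R(S)$ gives $\mathrm{sign}(g,M)=\mathrm{sign}(M)$ for every $g$ in the torus. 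If $M^{S}=\emptyset$ then $M^{\mathbb{T}^k}=\emptyset$, and evaluating at a generic $g\in\mathbb{T}^k$ (for which $M^{g}=M^{\mathbb{T}^k}=\emptyset$, so the $G$-signature sum is empty) already forces $\mathrm{sign}(M)=0$; so from now on assume $M^{S}\neq\emptyset$.

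The core step is to choose an element $g$ of the torus for which \emph{every} component of $M^{g}$ has codimension $\equiv 2\bmod 4$, and then apply the $G$-signature theorem to $g$. When $S$ is topologically cyclic this is immediate: a topological generator $g$ of $S$ has $M^{g}=M^{S}$, and the hypothesis of generalized odd type says exactly that all components have codimension $\equiv 2\bmod 4$. Granting such a $g$, fix a component $Z$ of $M^{g}$ with normal bundle $\nu$ of rank $c_{Z}\equiv 2\bmod 4$. Since $g$ is orientation preserving and acts trivially on $TZ$, the $(-1)$-eigenbundle of $g$ on $\nu\otimes\mathbb{C}$ has even real rank $d$, and $c_{Z}=d+2\sum_{\lambda}\dim V_{\lambda}$ over the remaining conjugate pairs of eigenbundles $V_{\lambda}\oplus\overline{V_{\lambda}}$. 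The $G$-signature local contribution of $Z$ has the schematic shape
\[
\mathrm{loc}(g)_{Z}\;=\;\Big\langle\, L(Z)\cdot\prod_{\lambda}\big(\text{eigenbundle factor for }\lambda\big),\ [Z]\,\Big\rangle ,
\]
where the factor attached to an eigenbundle with eigenvalue $e^{i\theta}$ and Chern roots $x_{i}$ is $\prod_{i}\coth\!\big(\tfrac{x_{i}+i\theta}{2}\big)$, which for $\theta=\pi$ reduces to $\prod_{l}\tanh(y_{l}/2)$ over the roots of the $(-1)$-eigenbundle.

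Now comes the arithmetic that uses the generalized odd type. Pairing each eigenbundle with its conjugate turns the product of factors into $(-1)^{\sum\dim V_{\lambda}}\prod_{i}\coth^{2}\!\big(\tfrac{x_{i}+i\theta}{2}\big)$ times, from the $(-1)$-eigenbundle, $(-1)^{d/2}\prod_{l}\tanh^{2}(y_{l}/2)$; the accumulated sign is $(-1)^{(d+2\sum\dim V_{\lambda})/2}=(-1)^{c_{Z}/2}=-1$ precisely because $c_{Z}\equiv 2\bmod 4$. As for the surviving cohomology class: $L(Z)$ and the $\tanh^{2}$-classes are real and concentrated in degrees $\equiv 0\bmod 4$, whereas a short computation shows $\coth\!\big(\tfrac{x+i\theta}{2}\big)$ has purely imaginary coefficients in the even powers of $x$ and real coefficients in the odd powers; hence $\coth^{2}\!\big(\tfrac{x+i\theta}{2}\big)$, and therefore any product of such classes, is real in degrees $\equiv 0\bmod 4$ and purely imaginary in degrees $\equiv 2\bmod 4$. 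Multiplying everything, $L(Z)\cdot\prod_{\lambda}(\cdots)$ is real in degrees $\equiv 0\bmod 4$ and purely imaginary in degrees $\equiv 2\bmod 4$. Since $\dim Z=4n-c_{Z}\equiv 2\bmod 4$, pairing against $[Z]$ extracts the purely imaginary part, so each $\mathrm{loc}(g)_{Z}$ is a purely imaginary number. Summing, $\mathrm{sign}(M)=\mathrm{sign}(g,M)=\sum_{Z}\mathrm{loc}(g)_{Z}$ is at once an integer and purely imaginary, hence $0$.

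The main obstacle — and the point where directions~(2) and~(3) genuinely interact — is producing the element $g$ when $S$ is \emph{not} topologically cyclic (a closed subgroup of a torus need not be, e.g.\ $(\mathbb{Z}/2)^{2}\subset\mathbb{T}^{2}$). I expect this to be handled either by a reduction to the topologically cyclic case, or, failing that, by applying the fixed-point formula iteratively down a chain $M\supseteq M^{S_{1}}\supseteq M^{S_{1}\cap S_{2}}\supseteq\cdots\supseteq M^{S}$ of fixed sets of topologically cyclic subgroups whose intersection is $M^{S}$: the reality/imaginarity bookkeeping above only ever used $\dim Z$ and $c_{Z}$ computed \emph{in $M$}, not in any intermediate fixed set, so it ought to survive the iteration. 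A second, more routine point needing care is pinning down the precise normalization (signs, and factors of $2$ and $i$) in the $G$-signature local term so that the parity count is airtight; the argument is robust to the exact normalization, but this should be checked.
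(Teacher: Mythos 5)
Your core step fails. You claim that for a fixed component $Z$ of $g$ with codimension $c_Z\equiv 2\bmod 4$ the local $G$-signature contribution is purely imaginary, and you derive this by pairing each eigenbundle with its conjugate, producing $\coth^2$-factors and a sign $(-1)^{c_Z/2}$. But the Atiyah--Singer local term uses \emph{one} factor per complex eigenbundle $N(\theta)$, $0<\theta<\pi$ (the conjugate pair $V_\lambda\oplus\overline{V_\lambda}$ in $\nu\otimes\mathbb{C}$ is what gives $N(\theta)$ its complex structure), not the squared product; your bookkeeping therefore computes the reality type of the wrong expression. Doing the parity count on the actual term: the degree-$2k$ part of $\prod_j\coth\bigl(\tfrac{x_j+i\theta}{2}\bigr)$ over a rank-$r$ eigenbundle is real iff $k\equiv r\bmod 2$, the $\theta=\pi$ factor and $L(Z)$ are real, and extracting total degree $\dim Z=4n-c_Z$ forces the number of ``imaginary'' factors to be even (using that the $(-1)$-eigenbundle has even rank since $g$ lies in a connected group). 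So when $\dim M\equiv 0\bmod 4$ every local contribution is \emph{real}, regardless of $c_Z\bmod 4$, and the ``integer and purely imaginary, hence zero'' conclusion collapses. A concrete check: for the standard circle action on $\mathbb{CP}^2$ fixing $\mathbb{CP}^1\cup\{pt\}$, the codimension-$2$ component $\mathbb{CP}^1$ contributes a nonzero real number (essentially $1/(2\sin^2(\theta/2))$ up to normalization), not an imaginary one. The true cancellation in the theorem is global, among the components, not term by term, so no normalization-checking will repair this.

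Separately, the case you flag as the ``main obstacle'' -- $S$ not topologically cyclic -- is exactly where the paper's argument diverges from yours, and your proposed iteration is not just unchecked but problematic: the generalized-odd-type hypothesis controls only the codimension of $M^S$ in $M$, not the codimensions arising at intermediate fixed sets, and applying rigidity or the fixed-point formula on those intermediate sets also requires knowing they are orientable, which you never address. The paper avoids the local formula entirely: it applies the rigidity statement $\mathrm{sign}(X)=\mathrm{sign}(X^{\mathbb{T}})$ (Theorem \ref{t21}) twice -- once to $M$ with the $\mathbb{T}^k$-action, and once to each component $Z$ of $M^S$ meeting $M^{\mathbb{T}^k}$ with the induced $\mathbb{T}^k/S$-action -- after proving $Z$ is orientable (Lemma \ref{l24}, built on Herrera--Herrera) and noting $\mathrm{sign}(Z)=0$ simply because $\dim Z\equiv 2\bmod 4$; grouping the components of $M^{\mathbb{T}^k}$ by the $Z$ containing them then gives $\mathrm{sign}(M)=\sum_Z\mathrm{sign}(Z)=0$. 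That route needs no choice of a special element $g$, no eigenvalue arithmetic, and handles arbitrary closed subgroups $S$ uniformly.
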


In \cite{HH2}, H. Herrera and R. Herrera prove vanishing results on characteristic classes of manifolds admitting circle actions. One of them is that if a $4n$-dimensional oriented manifold with finite second homotopy group admits a circle action of odd type, then its signature vanishes. As a special case of Theorem \ref{t13} where a torus group is the circle group and the closed subgroup is $\mathbb{Z}_2$, we recover the result without the assumption on the second homotopy group of the manifold.

\section{Preliminaries and the proof of the main result}

Let the circle act on a closed oriented manifold $M$. Then the equivariant index of the signature operator is defined for each element of $S^1$. In \cite{AS}, it is proved that the equivariant index is rigid under the circle action, i.e., is independent of the choice of an element of $S^1$, and is equal to the signature of $M$. Consequently, the signature of $M$ is equal to the sum of the signatures of the connected components of $M^{S^1}$.

Now, let a $k$-torus $\mathbb{T}^k$ act on a closed oriented manifold $M$. Then there exists a circle $S^1$ inside $\mathbb{T}^k$ that has the same fixed point set as $\mathbb{T}^k$, i.e., $M^{S^1}=M^{\mathbb{T}^k}$. As for $S^1$-actions, for an action of a torus the signature of $M$ is equal to the sum of the signatures of the connected components of $M^{\mathbb{T}^k}$. It follows from Theorem 6.12 in \cite{AS} and is stated explicitly in \cite{KR}.

\begin{theorem} \cite{AS}, \cite{KR} \label{t21} Let a $k$-torus $\mathbb{T}^k$ act on a closed oriented manifold $M$. Then $\textrm{sign}(M)=\textrm{sign}(M^{\mathbb{T}^k})$. \end{theorem}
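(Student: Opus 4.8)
The plan is to reduce the torus case to a circle action and then invoke the rigidity of the equivariant signature operator. First I would choose a circle subgroup $S^1 \subseteq \mathbb{T}^k$ whose fixed point set agrees with that of the whole torus, $M^{S^1} = M^{\mathbb{T}^k}$. Such a circle exists because $M^{\mathbb{T}^k} = \bigcap_i M^{\lambda_i}$, where the $\lambda_i$ are the finitely many weights occurring in the isotropy representations at the fixed points; a generic one-parameter subgroup avoids all the proper subtori cut out by these weights, so it has the same fixed locus as $\mathbb{T}^k$. This reduces the assertion to proving $\mathrm{sign}(M) = \mathrm{sign}(M^{S^1})$ for a circle action.

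For the circle action, I would consider the equivariant index of the signature operator. For $t \in S^1$ this is $\mathrm{sign}(t, M) = \mathrm{tr}(t \mid H^+) - \mathrm{tr}(t \mid H^-)$, the character of a virtual representation of $S^1$, hence a finite Laurent polynomial $P(t) = \sum_n a_n t^n$ with $a_n \in \mathbb{Z}$ and $P(1) = \mathrm{sign}(M)$. The Atiyah--Singer $G$-signature theorem (Theorem 6.12 of \cite{AS}) expresses $P(t)$, for $t \neq 1$, as a sum of local contributions supported on the connected components $F$ of $M^{S^1}$, each contribution being a rational function of $t$ whose poles lie at roots of unity determined by the rotation weights of the action on the normal bundle of $F$.

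The heart of the argument is \emph{rigidity}: I would show that $P(t)$ is in fact constant. Two inputs combine. First, the self-duality (Poincar\'e duality) of the signature complex gives the symmetry $P(t) = P(t^{-1})$. Second, the fixed-point formula furnishes a meromorphic continuation of $P$ to $t \in \mathbb{C}^\times$, and a careful analysis of the normal-bundle factors, which are built from $\coth$-type expressions in the equivariant Chern roots, shows that this continuation stays bounded as $t \to 0$ and $t \to \infty$. A Laurent polynomial that is symmetric under $t \mapsto t^{-1}$ and bounded at both $0$ and $\infty$ contains no nonzero powers of $t$, so $P(t) \equiv P(1) = \mathrm{sign}(M)$. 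I expect this to be the main obstacle, since the boundedness estimate hinges on controlling the cancellation of the individual poles of the fixed-point contributions.

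Finally I would identify the constant value with the signature of the fixed point set. Since $P$ is constant, its value may be read off from the localized formula by letting $t \to \infty$: as each $\coth$-factor attached to the normal bundle of a component $F$ tends to a limit, its contribution reduces to $\int_F L(F) = \mathrm{sign}(F)$, so $P(\infty) = \sum_F \mathrm{sign}(F) = \mathrm{sign}(M^{S^1})$. Combining this with $P(1) = \mathrm{sign}(M)$ and the constancy of $P$ yields $\mathrm{sign}(M) = \mathrm{sign}(M^{S^1}) = \mathrm{sign}(M^{\mathbb{T}^k})$, as claimed.
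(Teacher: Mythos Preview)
The paper does not actually supply a proof of this theorem; it is quoted as a known result from \cite{AS} and \cite{KR}, with the surrounding paragraph sketching exactly the reduction you propose: choose a generic circle $S^1 \subset \mathbb{T}^k$ with $M^{S^1} = M^{\mathbb{T}^k}$ and then appeal to the rigidity of the equivariant signature (Theorem~6.12 of \cite{AS}). Your outline is a correct expansion of that classical argument and in fact goes further than the paper, which simply defers to the references; the only point to be careful about in the final step is that the limiting value of each fixed-component contribution as $t \to \infty$ is $\mathrm{sign}(F)$ for the \emph{induced} orientation on $F$ coming from $M$ and the normal weights, so the identification $P(\infty) = \sum_F \mathrm{sign}(F)$ requires tracking those signs.
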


By $\textrm{sign}(M^{\mathbb{T}^k})$, it means the sum of the signatures of all connected components of $M^{\mathbb{T}^k}$, i.e., $\displaystyle \textrm{sign}(M^{\mathbb{T}^k})=\sum_{N \subset M^{\mathbb{T}^k}} \textrm{sign}(N)$.

In \cite{Ko}, Kobayashi proves that the fixed point set of a torus action on an orientable manifold is orientable.

\begin{lemma} \label{l22} \cite{Ko} Let a torus act on an orientable manifold $M$. Then the fixed point set is a union of closed orientable manifolds. \end{lemma}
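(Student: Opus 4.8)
The plan is to establish, for each connected component $Z$ of the fixed point set, that $Z$ is an embedded submanifold without boundary (closed in $M$, and compact when $M$ is closed) and that $Z$ is orientable; the lemma then follows by taking the union over all components. Write $T=\mathbb{T}^k$ for the acting torus. Since $T$ is compact, averaging an arbitrary Riemannian metric over $T$ produces a $T$-invariant metric, with respect to which the exponential map $\exp_p\colon T_pM\to M$ at a fixed point $p\in M^T$ is $T$-equivariant, where $T$ acts on $T_pM$ through the isotropy (linear) representation. Consequently, in a neighborhood of $p$ the set $M^T$ is carried to the fixed linear subspace $(T_pM)^T$, so $M^T$ is locally a linear subspace; this shows each component $Z$ is a smooth submanifold without boundary, closed as a subset of $M$.

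Next I would analyze the normal bundle and orient it. Fix a component $Z$ and let $\nu\to Z$ be its normal bundle, so that $TM|_Z=TZ\oplus\nu$ and, fiberwise, $T_pM=(T_pM)^T\oplus\nu_p=T_pZ\oplus\nu_p$. Thus each fiber $\nu_p$ carries a linear $T$-action with no nonzero fixed vector. A real representation of a torus with no nonzero fixed vector is a direct sum of $2$-dimensional irreducible real representations, each governed by a nonzero character $\chi\colon T\to S^1$ acting by rotation; such a summand has a canonical complex structure making $T$ act $\mathbb{C}$-linearly. The weights of the isotropy representation are discrete invariants and hence locally constant along $Z$, so these pointwise complex structures assemble into a genuine complex vector bundle structure on $\nu$. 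A clean way to package this is to use the circle $S^1\subset T$ with $M^{S^1}=M^T$ noted above: $S^1$ then acts fiberwise on $\nu$ with no nonzero fixed vector, and the standard weight-space decomposition $\nu=\bigoplus_{a\ge 1}\nu_a$ exhibits each $\nu_a$, and therefore $\nu$ itself, as a complex vector bundle. In particular $\nu$ is orientable.

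Finally I would transfer orientability to $Z$. Because $M$ is orientable, so is $TM$ and hence its restriction $TM|_Z$. From $TM|_Z=TZ\oplus\nu$ and the additivity $w_1(TM|_Z)=w_1(TZ)+w_1(\nu)$ of the first Stiefel--Whitney class, the vanishing of $w_1(TM|_Z)$ and $w_1(\nu)$ forces $w_1(TZ)=0$, so $TZ$ is orientable and $Z$ is an orientable manifold. The step requiring the most care is the coherent orientation of $\nu$ in the preceding paragraph, namely assembling the fiberwise complex structures on the $\nu_p$ into a globally defined bundle structure; this rests on the local constancy of the isotropy weights, and invoking the auxiliary circle $S^1$ with $M^{S^1}=M^T$ makes the global weight-space decomposition—and hence the orientation—transparent.
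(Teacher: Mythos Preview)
The paper does not prove this lemma; it is quoted from Kobayashi \cite{Ko} without argument and used as a black box. Your proposal is the standard proof and is correct: average to a $T$-invariant metric, use the equivariant exponential map to exhibit $M^T$ locally as a linear subspace (hence a closed submanifold), equip the normal bundle $\nu$ with a complex structure so that it is orientable, and deduce $w_1(TZ)=0$ from $w_1(TM|_Z)=w_1(TZ)+w_1(\nu)$. One point deserves tightening: in your first pass at orienting $\nu$, local constancy of the isotropy weights alone does not furnish a \emph{canonical} fiberwise complex structure, since each $2$-dimensional real irreducible of a torus admits two. Your fallback via a circle $S^1\subset T$ with $M^{S^1}=M^T$ does resolve this (for a circle the positive-weight convention is canonical), but you should justify the existence of such a circle rather than write ``noted above,'' which refers to nothing in your own argument; on a fixed connected component $Z$ only finitely many weights occur in $\nu$, so a generic one-parameter subgroup of $T$ avoids all their kernels and has the same fixed set on $Z$.
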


Given a circle action on an oriented manifold, H. Herrera and R. Herrera prove the orientability of the set of points fixed by a subgroup of $S^1$.

\begin{lemma} \label{l23} \cite{HH} Let $M$ be an oriented, $2n$-dimensional, smooth manifold endowed with a smooth $S^1$-action. Consider $\mathbb{Z}_k \subset S^1$ and its corresponding action on $M$. If $k$ is odd then the fixed point set $M^{\mathbb{Z}_k}$ of the $\mathbb{Z}_k$-action is orientable. If $k$ is even and a connected component $Z$ of $M^{\mathbb{Z}_k}$ contains a fixed point of the $S^1$-action, then $Z$ is orientable. \end{lemma}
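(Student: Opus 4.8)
The plan is to reduce the orientability of a fixed-point component to the vanishing of the first Stiefel--Whitney class of its normal bundle, and then to analyze that bundle representation-theoretically. Fix a $\mathbb{Z}_k$-invariant Riemannian metric on $M$ (by averaging), let $Z$ be a connected component of $M^{\mathbb{Z}_k}$, and let $\nu \to Z$ be its normal bundle. Since $Z$ is a fixed-point component, $TM|_Z = TZ \oplus \nu$ as $\mathbb{Z}_k$-bundles, with $\mathbb{Z}_k$ acting fiberwise on $\nu$ without nonzero fixed vectors. As $M$ is oriented, $w_1(TM)=0$, so $w_1(TZ)=w_1(\nu)$; hence $Z$ is orientable if and only if $\nu$ is orientable, and the whole problem reduces to showing $w_1(\nu)=0$.

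First I would decompose $\nu$ into its $\mathbb{Z}_k$-isotypic sub-bundles (the canonical decomposition of an equivariant bundle for a finite group). Every nontrivial real irreducible representation of $\mathbb{Z}_k$ is either a $2$-dimensional rotation module, which carries a canonical complex structure and hence a canonical orientation, or, \emph{only} when $k$ is even, the $1$-dimensional sign module on which the generator acts by $-1$. For $k$ odd there is no sign module, so $\nu$ is a direct sum of complex sub-bundles; each is orientable, therefore $\nu$ is orientable, $w_1(\nu)=0$, and $M^{\mathbb{Z}_k}$ is orientable. This settles the odd case completely.

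For $k$ even I would write $\nu = N' \oplus N^-$, where $N'$ collects the rotation-type isotypic pieces (complex, hence oriented) and $N^-$ is the sign-isotypic sub-bundle, on which $\mathbb{Z}_k$ acts by $-\mathrm{id}$. Then $w_1(\nu)=w_1(N^-)$, and everything comes down to $w_1(N^-)=0$. Here the hypothesis that $Z$ contains an $S^1$-fixed point $q$ enters. Since $S^1$ is abelian, it commutes with $\mathbb{Z}_k$ and preserves the isotypic decomposition, so $N^-$ is $S^1$-equivariant. At $q$ the $S^1$-representation on $\nu_q$ is a sum of $2$-dimensional weight spaces; those whose weight is $\equiv k/2 \pmod{k}$ restrict to $\mathbb{Z}_k$ as the sign module, each contributing multiplicity two, so the sign module occurs in $\nu_q$ with even multiplicity and $N^-_q$ acquires a complex structure, hence a canonical orientation from the $S^1$-rotation. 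By connectedness of $Z$ the rank of $N^-$ equals this same even number at every point.

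The main obstacle is to propagate this pointwise orientation at $q$ to a global orientation of $N^-$, that is, to upgrade ``$N^-_q$ is complex'' to ``$w_1(N^-)=0$.'' This step is genuinely delicate: even rank alone does not force orientability (for $k=2$, $N^-$ is the entire normal bundle of the fixed set of an orientation-preserving involution, which need not be orientable in general), and neither does the bare existence of an $S^1$-fixed point, since $\mathbb{Z}_k$ acts trivially on $\det N^-$ and an $S^1$-equivariant real line bundle over a connected base with a fixed point can be nontrivial. The essential input must therefore be the complex structure at $q$ \emph{together with} global $S^1$-equivariance. The route I would pursue is the orientation double cover $\pi \colon \widetilde{Z} \to Z$ determined by $w_1(N^-)$: the connected group $S^1$ acts on $N^-$ by bundle maps, hence lifts to $\widetilde{Z}$, and since $S^1$ is connected the two points of $\pi^{-1}(q)$ are both $S^1$-fixed and are interchanged by the orientation-reversing deck involution $\tau$. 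If $w_1(N^-)\neq 0$ then $\widetilde{Z}$ is connected, and one would compare the canonical complex orientation of $N^-_q$, which is $S^1$-invariant and carried by $\tau$ to the complex orientation over the other sheet, against the orientation-reversing action of $\tau$, seeking a contradiction. Making this comparison rigorous, or equivalently manufacturing a global $S^1$-invariant complex structure on $N^-$ from its complex structure at $q$, is where the real work lies, and I expect it to be the crux of the proof.
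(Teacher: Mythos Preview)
The paper does not prove this lemma; it is quoted from \cite{HH} and used as a black box, so there is no in-paper argument to compare your attempt against.

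On the substance: your odd-$k$ case is correct and is the standard argument. The even-$k$ case is, as you yourself flag, incomplete, and the double-cover route you sketch does not actually close. If $w_1(N^-)\neq 0$ then $\widetilde Z$ is connected, $S^1$ lifts, both points of $\pi^{-1}(q)$ are $S^1$-fixed, and the deck involution $\tau$ swaps them; none of this is contradictory. The complex orientation of $N^-_q$ simply singles out one of the two sheets, and $\tau$ carries it to the other---nothing forces the two to coincide, so no contradiction materializes. You have correctly isolated the obstacle (an $S^1$-equivariant real line bundle over a connected base with a fixed point can be nontrivial, so equivariance plus even rank is not enough), but you have not supplied the missing idea. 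One piece of structure your outline never exploits is the element $\lambda=e^{i\pi/k}\in S^1$: its square generates $\mathbb{Z}_k$ and therefore acts on $N^-$ by $-\mathrm{id}$, so $\lambda$ furnishes a bundle map $N^-\to N^-$ covering the involution $\lambda|_Z$ with $\lambda^2=-\mathrm{id}$ fiberwise. That is far closer to a global complex structure on $N^-$ than anything available from your sketch, and some device of this kind is what one needs to pass from the local complex structure at $q$ to global orientability of $N^-$. As written, the proposal accurately diagnoses the difficulty but does not resolve it.
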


Let $S$ be a closed subgroup of $\mathbb{T}^k$. Then $S$ is Lie isomorphic to a product of $S^1$'s and $\mathbb{Z}_a$'s, i.e., $S \approx S^1 \times \cdots \times S^1 \times \mathbb{Z}_{a_1} \times \cdots \times \mathbb{Z}_{a_m}$, where $a_i$'s are positive integers bigger than 1. Note that the $a_i$'s may have repeated elements. By using Lemma \ref{l22} and Lemma \ref{l23}, we extend Lemma \ref{l23} to torus actions.

\begin{lemma} \label{l24} Let a $k$-torus $\mathbb{T}^k$ act on a $2n$-dimensional orientable manifold $M$ and $S$ a closed subgroup of $\mathbb{T}^k$. Let $Z$ be a connected component of $M^S$. If $Z$ contains a $\mathbb{T}^k$-fixed point (i.e., if $Z \cap M^{\mathbb{T}^k} \neq \emptyset$), then $Z$ is orientable. \end{lemma}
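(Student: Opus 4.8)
The plan is to reduce the torus case to the two previously established orientability results (Lemma \ref{l22} for subtori and Lemma \ref{l23} for cyclic subgroups of a circle) by decomposing the closed subgroup $S$ and peeling off its factors one at a time. Write $S \approx \mathbb{T}^\ell \times \mathbb{Z}_{a_1} \times \cdots \times \mathbb{Z}_{a_m}$ as in the paragraph preceding the statement, where $\mathbb{T}^\ell$ denotes the identity component of $S$ (a subtorus of $\mathbb{T}^k$) and the finite part is generated by elements $g_1, \dots, g_m$ of orders $a_1, \dots, a_m$. The key observation is that $M^S = \big(M^{\mathbb{T}^\ell}\big) \cap \bigcap_{i=1}^m M^{\langle g_i\rangle}$, so I would build up $Z$ as a connected component of a nested intersection and apply an orientability criterion at each stage.

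First I would handle the identity component: $M^{\mathbb{T}^\ell}$ is a union of closed orientable submanifolds by Lemma \ref{l22}, so the connected component $Z_0$ of $M^{\mathbb{T}^\ell}$ containing $Z$ is an orientable closed manifold, and it still carries an action of (a quotient of) $\mathbb{T}^k$ — concretely, since $\mathbb{T}^\ell$ is normal (all subgroups of an abelian group are), $\mathbb{T}^k$ permutes the components of $M^{\mathbb{T}^\ell}$, and the component $Z_0$ is invariant because it contains a $\mathbb{T}^k$-fixed point; thus $\mathbb{T}^k$ acts on $Z_0$ with the same $\mathbb{T}^k$-fixed point in $Z_0$. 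Next I would bring in the finite cyclic factors one at a time. Having an orientable manifold $Z_{j-1}$ with a $\mathbb{T}^k$-action (equivalently, an $S^1$-action through any circle $S^1 \subset \mathbb{T}^k$ hitting $g_j$, i.e., a circle whose closure contains $g_j$) and a $\mathbb{T}^k$-fixed point $p \in Z_{j-1}$, I would set $Z_j$ to be the component of $(Z_{j-1})^{\langle g_j\rangle}$ containing $p$. Since $p$ is fixed by all of $\mathbb{T}^k$, in particular by the circle $S^1$, Lemma \ref{l23} applies (in the even-order case it is exactly the hypothesis that the component contains an $S^1$-fixed point; in the odd-order case orientability is automatic) and gives that $Z_j$ is orientable. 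The same permutation argument shows $Z_j$ is $\mathbb{T}^k$-invariant and contains $p$ as a $\mathbb{T}^k$-fixed point, so the induction continues. After $m$ steps, $Z_m$ is a connected orientable manifold sitting inside $M^S$ and containing $p$; since $Z$ is the connected component of $M^S$ through $p$ and $Z_m \subseteq M^S$ is connected and contains $p$, we get $Z_m \subseteq Z$, and a dimension/open-closed argument (each $Z_j$ is a closed submanifold, hence $Z_m$ is open and closed in $M^S$ near $p$) forces $Z_m = Z$. Hence $Z$ is orientable.

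I expect the main obstacle to be the bookkeeping needed to guarantee that at each stage we genuinely still have a circle subaction of the \emph{same} torus for which the relevant cyclic group $\langle g_j\rangle$ is a subgroup, so that Lemma \ref{l23} is applicable verbatim: one must choose, for each $j$, a circle $S^1_j \subset \mathbb{T}^k$ with $g_j \in S^1_j$ (this is always possible since any element of a torus lies in some circle subgroup — take the closure of a generic one-parameter subgroup through it, or note $g_j$ generates a finite subgroup contained in a circle), and then check that the $S^1_j$-fixed point set on $Z_{j-1}$ is what Lemma \ref{l23} controls. A secondary subtlety is checking $\mathbb{T}^k$-invariance of the chosen components; this is where containing the global fixed point $p$ is used decisively, since $\mathbb{T}^k$ being connected means it cannot move the component through a point it fixes. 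None of these steps requires computation, only care with the group theory of closed subgroups of tori; the substantive geometric input is entirely contained in Lemmas \ref{l22} and \ref{l23}.
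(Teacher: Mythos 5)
Your proposal is correct and follows essentially the same route as the paper: decompose the closed subgroup $S$ into its identity component and finite cyclic factors, apply Lemma \ref{l22} to the subtorus part, and then inductively peel off each $\mathbb{Z}_{a_i}$ via Lemma \ref{l23}, using the $\mathbb{T}^k$-fixed point to supply the required $S^1$-fixed point at every stage. The only cosmetic difference is that the paper chooses its auxiliary circles inside the induced quotient torus acting on each intermediate component rather than directly inside $\mathbb{T}^k$, which changes nothing of substance.
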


\begin{proof} Without loss of generality, by choosing an orientation, assume that $M$ is oriented. Since $S$ is a closed subgroup of $\mathbb{T}^k$, $S$ is isomorphic to \begin{center} $S=(S^1)^l \times \mathbb{Z}_{a_1} \times \cdots \times \mathbb{Z}_{a_m}$ \end{center} for some $l \geq 0$ and positive integers $a_i>1$ for $i=1,\cdots,m$.

Denote $S_i=(S^1)^l \times \mathbb{Z}_{a_1} \times \cdots \times \mathbb{Z}_{a_i}$ for $i=0,1,\cdots,m$. Also, denote $M_i$ by the set of points fixed by $S_i$-action, i.e., $M_{i}=M^{S_{i}}$. We prove that for any $i$ if $Z_i$ is a connected component of $M_i$ that contains $Z$, then $Z_i$ is orientable.

Consider the case that $i=0$. Then $S_0=(S^1)^l$. By Lemma \ref{l22}, the set $M^{S_0}=M^{(S^1)^l}$ of points fixed by the $(S^1)^l$-action is a union of smaller dimensional closed orientable manifolds.

Suppose that a connected component $Z_{i-1}$ of $M_{i-1}$ is orientable and contains $Z$. On $Z_{i-1}$, we have an induced action of $G_i=\mathbb{T}^k/S_i = (S^1)^k / (S^1)^l \times \mathbb{Z}_{a_1} \times \cdots \times \mathbb{Z}_{a_{i}}=(S^1)^{k-l}=\mathbb{T}^{k-l}$. Moreover, as a subgroup of $G_i$, $\mathbb{Z}_{a_i}$ acts on $Z_{i-1}$. Given a generator $b_i$ of $\mathbb{Z}_{a_i}$, there exists $X_i$ in the Lie algebra of $\mathbb{T}^{k-l}$ such that $\mathrm{exp}(\frac{1}{a_i}X_i)=b_i$. In other words, $X_i$ generates a circle $S^1$ such that $\mathbb{Z}_{a_i} \subset S^1$. Denote the circle by $H_i$.

Therefore, on the orientable manifold $Z_{i-1}$, we have the action of the circle $H_i$ and the action of $\mathbb{Z}_{a_i}$, as a subgroup of $H_i$. A connected component $Z_i$, the set of points in $Z_{i-1}$ that are fixed by the $\mathbb{Z}_{a_i}$-action and contains $Z$, contains a $H_i$-fixed point, since it contains $Z$ which is fixed by the $\mathbb{T}^k$-action and $H_i$ is a subgroup of $\mathbb{T}^k$.

Apply Lemma \ref{l23} for the action of the circle $H_i$ on $Z_{i-1}$ with its subgroup $\mathbb{Z}_{a_i}$. Since the connected component $Z_i$ of $Z_{i-1}^{\mathbb{Z}_{a_i}}$ contains a $H_i$-fixed point, it follows that $Z_i$ is orientable.

Note that $Z_m=Z$. The lemma then follows by inductive argument. \end{proof}

Note that in Lemma \ref{l24}, we can remove the condition that $Z$ contains a $\mathbb{T}^k$-fixed point, if all $a_i$ are odd. With Lemma \ref{l24}, we are ready to prove our main result.

\begin{proof}[Proof of Theorem \ref{t13}] If the dimension of $M$ is not divisible by 4, then the signature of $M$ is defined to be 0 and hence the theorem follows. Therefore, from now on, suppose that the dimension of $M$ is divisible by 4.

Let $\mathbb{T}^k$ be the torus which acts on $M$. Let $S$ be the subgroup of the torus $\mathbb{T}^k$ such that the torus action is of odd type with respect to $S$. Let $Z$ be a connected component of the set $M^S$ of points fixed by the $S$-action on $M$ that contains a $\mathbb{T}^k$-fixed point, i.e., $Z \cap M^{\mathbb{T}^k} \neq \emptyset$. Then by Lemma \ref{l24}, $Z$ is orientable. Choose an orientation of $Z$. Since the $\mathbb{T}^k$-action is of odd type with respect to the closed subgroup $S$, $\dim Z \equiv \dim M -2 \mod 4$. Since $\dim M \equiv 0 \mod 4$, we have that $\dim Z \equiv 2 \mod 4$. Therefore, we have that $\textrm{sign}(Z)=0$. On $Z$, there is an induced action of $\mathbb{T}^k/S=\mathbb{T}^{k'}$. Moreover, the set of points in $Z$ that are fixed by the induced $\mathbb{T}^{k'}$-action is precisely the set of points in $Z$ that are fixed by the entire $\mathbb{T}^k$-action on $M$, i.e., $Z^{\mathbb{T}^{k'}}=Z \cap M^{\mathbb{T}^k}$. Applying Theorem \ref{t21} to the induced action of the torus $\mathbb{T}^{k'} =\mathbb{T}^k /S$ on $Z$, we have that 
\begin{center}
$\displaystyle \sum_{N \subset Z^{\mathbb{T}^{k'}}} \textrm{sign}(N)=\textrm{sign}(Z)=0$.
\end{center}

On the other hand, by directly applying Theorem \ref{t21} to the action of the $k$-torus $\mathbb{T}^k$ on $M$, we have that
\begin{center}
$\displaystyle \textrm{sign}(M)=\sum_{N \subset M^{\mathbb{T}_k}} \textrm{sign}(N)$.
\end{center}

Since $M^{\mathbb{T}^k} \subset M^S \subset M$, each connected component $N$ of $M^{\mathbb{T}^k}$ is contained in a unique connected component $Z$ of $M^S$ which contains a $\mathbb{T}^k$-fixed point, as $Z$ contains the $\mathbb{T}^k$-fixed component $N$. Therefore, we have that
\begin{center}
$\displaystyle \textrm{sign}(M)=\sum_{N \subset M^{\mathbb{T}_k}} \textrm{sign}(N)=\sum_{Z \subset M^S, Z \cap M^{\mathbb{T}^k} \neq \emptyset} \sum_{N \subset Z \cap M^{\mathbb{T}_k}} \textrm{sign}(N)$

$\displaystyle =\sum_{Z \subset M^S, Z \cap M^{\mathbb{T}^k} \neq \emptyset} \textrm{sign}(Z)=0$.
\end{center}
\end{proof}

\end{document}